\theoremstyle{plain}
\newtheorem{thm}{Theorem}[section]
\newtheorem{lem}[thm]{Lemma}
\theoremstyle{definition}
\newtheorem{rem}[thm]{Remark}
\numberwithin{equation}{section}
\def\cD{{\mathcal D}}
\def\cO{{\mathcal O}}
\def\bR{\mathbb{R}}
\def\cDA{{\cD(A)}}
\def\bC{{\mathbb C}}
\def\bD{{\mathbb D}}
\def\pt{\partial}
\newcommand\id{\operatorname{id}}
\newcommand\Bloch{\mathcal{B}}
\newcommand\cte{\mathbbm{1}}
\def\al{\alpha}
\def\be{\beta}
\def\ga{\gamma}
\def\Re{\operatorname{Re}}
\def\Hol{\operatorname{Hol}}
  \newtheorem*{main thm}{Main Theorem}
\def\beq{\begin{eqnarray}}
\def\eeq{\end{eqnarray}}
\def\beqa{\begin{eqnarray*}}
\def\eeqa{\end{eqnarray*}}
\def\wt{\widetilde}
\def\Om{\Omega}
\def\pt{\partial}
\def\de{\delta}
\def\ga{\gamma}
\def\beqn{\begin{equation}}
\def\eeqn{\end{equation}}
\def\cL{{\mathcal L}}
\def\mg#1{}
\renewcommand{\epsilon}{\varepsilon}
\renewcommand{\phi}{\varphi}
\newcommand\re{\operatorname{Re}}
\newcommand\im{\operatorname{Im}}
\begin{document}
%%%%%%%%%%%%%%%%%%%%%%%%%%%%%%%%%%%%%%%%%%%%%%%%%%%%%%
\title[Generators of $C_0$ semigroups]{Generators of $C_0$-semigroups\\ of weighted composition operators}
\author{Eva A. Gallardo-Guti\'errez}
%\vskip-1cm
\address{E. A. Gallardo-Guti\'errez \newline Departamento de An\'alisis Matem\'atico y\newline
Matem\'atica Aplicada \newline
Facultad de CC. Matem\'aticas,
\newline Universidad Complutense de
Madrid, \newline
 Plaza de Ciencias N$^{\underbar{\Tiny o}}$ 3, 28040 Madrid,  Spain
 \newline
and Instituto de Ciencias Matem\'aticas ICMAT(CSIC-UAM-UC3M-UCM),
\newline Madrid,  Spain } \email{eva.gallardo@mat.ucm.es}
\thanks{First author is partially supported by Plan Nacional  I+D grant no. PID2019-105979GB-I00, Spain}

\author{Aristomenis G. Siskakis}
\address{A. G. Siskakis \newline Department of Mathematics\newline
Aristotle University of Thessaloniki \newline
54124 Thessalonıki, Greece}
\email{siskakis@math.auth.gr}
%\thanks{Second author is partially supported by -----}

\author{Dmitry Yakubovich}
\address{D. V. Yakubovich\newline
Departamento de Matem\'aticas,\newline  Universidad Aut\'onoma de Madrid,\newline
Cantoblanco, 28049 Madrid, Spain\newline
and Instituto de Ciencias Matem\'aticas ICMAT (CSIC-UAM-UC3M-UCM), \newline Madrid, Spain.}
\email {dmitry.yakubovich@uam.es}
\thanks{Third author is partially supported by
Spanish Ministry of Science, Innovation and Universities (grant no. PGC2018-099124-B-I00).
\newline
The first and the third author also acknowledge support from ``Severo Ochoa Programme for Centres of Excellence in R\&D'' (CEX2019-000904-S) of the
Ministry of Economy and Competitiveness of Spain and by the European Regional Development
Fund and from the Spanish
National Research Council, through the ``Ayuda extraordinaria a
Centros de Excelencia Severo Ochoa'' (20205CEX001).
}

\subjclass[2010]{47B33 (primary), 30C45 (secondary)}

\keywords{Weighted composition operators, $C_0$-semigroups, Cocycles of Holomorphic Flows}

\date{\today}

\begin{abstract}
We prove that in a large class of Banach spaces of analytic functions in the unit disc $\mathbb{D}$
an (unbounded) operator $Af=G\cdot f'+g\cdot f$ with $G,\, g$ analytic in $\mathbb{D}$
generates a $C_0$-semigroup of weighted composition operators if and only
if it generates a $C_0$-semigroup. Particular instances of such spaces are the classical Hardy spaces.
Our result generalizes previous results in this context and it is related to
cocycles of flows of analytic functions on Banach spaces. Likewise, for a large class of non-separable Banach
 spaces $X $ of analytic functions in $\mathbb{D}$ contained in the Bloch space, we prove that no non-trivial
 holomorphic flow induces a $C_0$-semigroup of weighted composition operators on $X$. This generalizes
 previous results in ~\cite{BCDM}  and ~\cite{AJS} regarding $C_0$-semigroup of
 (unweighted) composition operators.
\end{abstract}

\maketitle

%%%%%%%%%%%%%%%%%%%%%%%%%%%%%%%%%%%%%%%%%%%%%%%%%%%%%%%%%

% \section{Introduction and Preliminaries}

\section{Introduction}

A natural question in the study of semigroups of operators  is to characterize their generators, whenever they are defined.
Recently, in \cite{ACh} and \cite{Gallardo-Yakubovich} generators of semigroups of composition operators were
characterized in the context of a large variety of Banach spaces of analytic functions in the unit disc $\mathbb{D}$.  In
particular, if an (unbounded) operator $Af=G\cdot f'$ on the classical Hardy space $H^p$, $1\leq p<\infty$
generates a $C_0$-semigroup, this is a semigroup of composition operators. See also the previous
works \cite{AChP1}, \cite{AChP2} or \cite{ChalPart2016}.

\medskip

The study of semigroups of composition operators on various function spaces of analytic functions traces back to
the pioneering work of Berkson and Porta \cite{Ber-Por} in the late seventies. As a particular instance, they
gave a characterization of the generators of semigroups of composition operators acting on the classical Hardy
spaces  $H^p$ induced by \textit{holomorphic flows} of analytic self-maps of the unit disc $\mathbb{D}$
showing, in turns, that these semigroups are always strongly continuous. Later on, Cowen \cite{Cowen} and
Siskakis \cite{Siskakis} found applications of this theory to the study of Ces\`aro and other averaging
operators. Semigroups of composition operators acting on other holomorphic function spaces  as well as the
inducing holomorphic flows have been extensively studied in the last three  decades (we refer to the recent
monograph \cite{BCD} for more on flows).
\medskip

Nevertheless, in what refers to semigroups of weighted composition operators the panorama differs completely.
Seminal works in the context of Hardy spaces were  \cite{Sis2} and \cite{Konig} by the end of the eighties.
More recently, in 2012, the authors of \cite{Jafari-CAOT12} studied which strongly continuous semigroups of
operators on Banach spaces of analytic functions arise from holomorphic flows, where semigroups  of weighted
composition operators play an important role.
\medskip

Our work has two main results, which in some sense, complement each other. The first one
characterizes  generators of strongly continuous semigroup (i.e. $C_0$-semigroups) of weighted composition operators
acting on a large class of Banach spaces $X $ of analytic functions in the unit disc $\mathbb{D}$.
More precisely, in Section~\ref{Section 2} we will show that whenever the  polynomials are dense in the
Banach space $X $, an (unbounded) operator $A$ acting on $X $ like
$$
Af=G\cdot f'+g \cdot f,
$$
with $G$ and $g$ analytic in $\mathbb{D}$ generates a $C_0$-semigroup of weighted composition operators if and only if it
generates a $C_0$-semigroup. In order to prove it, we will make use of some of the ideas in \cite{Gallardo-Yakubovich},
along with the concept of cocycles of flows of analytic functions on Banach spaces.
\medskip

Our second result deals with classes of Banach spaces
$X $ where polynomials are no longer dense, namely, with
spaces satisfying
$$
H^\infty \subseteq X \subseteq \Bloch,
$$
where $\Bloch $ denotes the Bloch space and $H^\infty$ is the algebra of bounded analytic functions on
$\mathbb{D}$.

In particular, in Section~\ref{Section 3} we will prove that in such a case, a nontrivial holomorphic flow
cannot give rise to a non-trivial weighted composition $C_0$-semigroup. This extends previous
results in \cite{BCDM} and  \cite{AJS} regarding composition operators $C_0$-semigroups, that is, the unweighted case.

\section{Preliminaries}

\medskip

Recall that a one-parameter family $\Phi=\{\phi_t\}_{t\ge 0}$ of analytic self-maps of $\mathbb{D}$ is called
a \textit{holomorphic flow} (or \textit{holomorphic semiflow} by some authors) provided that it is a continuous
family that has a semigroup property with respect to composition, namely
\begin{enumerate}
\item[1)]
$\phi_0(z)=z$, $\forall z\in \bD$;

\item[2)] $\phi_{t+s}(z)=\phi_{t}\circ \phi_{s}(z)$, $\forall
t,s\ge0, \, \forall z\in \bD$;

\item[3)] For any $s\ge0$ and any $z\in\bD$,
$\lim_{t\to s}\phi_{t}(z)=\phi_{s}(z)$,
\end{enumerate}
(see \cite{Sh}, for instance). In the trivial case, $\varphi_t(z) =z$ for all $t\geq 0$. Otherwise, we say that $\Phi$ is nontrivial.
\smallskip

The infinitesimal generator of  $\Phi=\{\phi_t\}_{t\ge 0}$ is the function $G$, defined
by
\begin{equation*}%\label{generator Phi}
	G(z)=\lim_{t\to 0^+} \frac{\phi_t(z)-z}{t}= \frac{\partial \varphi_t(z)}{ \partial t}\arrowvert_{t=0}.
\end{equation*}
This limit exists uniformly on compact subsets of $\mathbb{D}$ and for each $z\in \mathbb{D}$, the mapping
$w:\, t\to \varphi_t(z)$ satisfies the differential equation
$$
\frac{d}{dt} w(t)= G(w(t))
$$
$\mbox{ with } w(0)=z$ (see \cite{Ber-Por}).

\medskip

Associated to the holomorphic flow $\Phi=\{\phi_t\}_{t\ge 0}$ is the family of composition operators $\{C_t \}_{t\geq 0}$, defined
on the space of analytic functions on $\mathbb{D}$ by
$$
C_t f= f\circ \phi_t.
$$
Clearly, $\{C_t \}_{t\geq 0}$ has the semigroup property:
\begin{enumerate}
\item[1)] $C_0=I$ (the identity operator);

\item[2)]$C_t C_s=C_{t+s}$ for all $t,s\ge 0$.
\end{enumerate}
Moreover, recall that if an operator  semigroup $\{T_t\}_{t\geq 0}$ acts on an (abstract) Banach space
$X $, then it is called \emph{strongly continuous} or \emph{$C_0$-semigroup}, if it satisfies
$$
\lim_{t\to 0^+} T_t f=f
$$
for any $f\in X $. As mentioned in the introduction, Berkson and Porta \cite{Ber-Por} proved that every holomorphic flow
always induces a strongly continuous semigroup on the classical Hardy spaces $H^p$ and extensions to other spaces of analytic
functions have been extensively studied in the last years.

Given a $C_0$-semigroup $\{T_t\}_{t\geq0}$ on a Banach space $X $, recall that its generator is the  closed and
densely defined linear operator $A$,
defined by
\[
Af=\lim_{t\to 0^+} \frac {T_t f-f} t
\]
with domain $\cDA=\{f\in X : \lim_{t\to 0^+} \frac {T_t f-f} t \enspace \text{exists}\}$. The semigroup is
determined uniquely by its generator.
\medskip

On the other hand, given a  holomorphic flow $\Phi=\{\phi_t\}_{t\ge 0}$ in $\mathbb{D}$,
a multiplicative \emph{cocycle} for $\Phi$ is a continuous complex-valued function
$m:[0,+\infty)\times \mathbb{D} \rightarrow \mathbb{C}$ such that
\begin{enumerate}
\item[1)] $m(t, \cdot )$ is analytic on $\mathbb{D}$ for all $t\geq 0$;

\item[2)]  $m(0,z)=1$, $\forall z\in \bD$;

\item[3)] $m(t+s,z)= m(s,z)\,  m(t, \phi_{s}(z))$, $\forall
t,s\ge0, \, \forall z\in \bD$.
\end{enumerate}
For the sake of simplicity, we will denote $m_t(z)=m(t,z)$ for all $t\geq 0$ and $z\in \mathbb{D}$.
\medskip

The third equation above is often called the \emph{cocycle identity} and it implies that $m_0(z)$ is either $1$ or $0$,
so the second equation is simply  a non-triviality condition. K\"onig \cite{Konig} investigated weighted holomorphic flows
on the unit disc and provided a characterization of smooth cocycles in the Hardy space. In addition, he proved that the cocycle
identity implies that $m$ is not vanishing (see \cite[Lemma 2.1]{Konig}).
\medskip

A particular instance of a cocycle for a holomorphic flow $\Phi=\{\phi_t\}_{t\ge 0}$ is a \emph{coboundary}, that is, a
continuous complex-valued function $m$ on $[0,+\infty)\times \mathbb{D}$ such that there exists holomorphic function
$\alpha$ on $\mathbb{D}$, non-vanishing except possibly at the common fixed point of $\phi_t$, satisfying

\begin{equation}\label{coboundary eq}
m(t,z)=\frac{\alpha(\varphi_t(z))}{\alpha(z)} \mbox{ for all } (t,z)\in [0,+\infty)\times \mathbb{D}.
\end{equation}
\medskip

Cocycles and, in particular, coboundaries arise naturally in the theory of weighted composition operators semigroups.
Observe that given a holomorphic flow $\Phi=\{\phi_t\}_{t\ge 0}$ and a cocycle $m$ for $\Phi$, the weighted
composition operators $\{W_t: t\geq 0\}$ defined  on the space of analytic functions on $\mathbb{D}$ by
\begin{equation}\label{eq weighted}
W_t f(z)= m_t(z)(C_t f)(z)=m_t(z) f(\varphi_t(z)), \qquad (z\in \mathbb{D})
\end{equation}
 form a semigroup. Indeed the converse also holds. Namely, \eqref{eq weighted} defines a
semigroup if and only if $m$ is a cocycle for $\Phi$. In addition, if $m$ is a coboundary then \eqref{coboundary
eq} easily yields that
$$
W_t= M_{\frac{1}{\alpha}} C_t\,  M_{\alpha}
$$
for every $t\geq 0$, where $M_{\alpha}$ is the operator of multiplication by $\alpha(z)$. This says that for
every $t\geq 0$, $W_t$ and $C_t$ are similar as operators on the space of all analytic functions on
$\mathbb{D}$.
\medskip

Throughout the rest of the manuscript, $X $ will be a Banach space of holomorphic
functions on $\mathbb{D}$ and $\cL(X )$ will denote the space of bounded
linear operators acting on $X $. The space consisting of
all analytic functions on $\bD$ is denoted
by $\Hol(\bD)$ while $\mathcal{O}(\overline \bD)$ stands for the set of all functions, holomorphic
on (a neighborhood of) the closed unit disc $\overline
\bD$ endowed with the usual topologies (regarding $\mathcal{O}(\overline \bD)$ we refer to
\cite[pp. 81]{Ber-Gay}, for instance).
We will always assume the following natural condition:
\newline
\\
$
(\star) \hspace*{6cm}  \cO(\overline \bD)\hookrightarrow{} X \hookrightarrow{} \Hol(\bD),
$
\newline
\\
being both embeddings continuous.
\medskip

In order to complete this preliminary section, we reproduce a theorem essentially  contained in \cite{Konig} for $H^p$
spaces (see also \cite{Sis2} for the case of coboundaries). The proof works as well for Banach spaces $X $ satisfying $(\star)$.

\begin{thm} \label{siskakis-konig-thm}
	Let $\{W_t\}_{t \ge 0}$ be a weighted composition $C_0$-semigroup acting on $X $
	defined by \eqref{eq weighted}, and let $G$ be the infinitesimal generator of $\{\varphi_t\}$.
	Then the following holds.
	
	\begin{enumerate}
		\item[(i)]
		The equation
		\begin{equation}\label{m-and-g}
		g(z)=\frac{\partial m_t(z)}{ \partial t}\arrowvert_{t=0},\qquad  \mbox{ for } z\in \mathbb{D}
		\end{equation}
		defines an analytic function in $\bD$, and $m_t$ is a cocycle for $\Phi$.
		Moreover, $m_t$ is expressed by
		\begin{equation}\label{weight expression}
		m_t(z)=\exp \left ( \int_0^t g(\varphi_s (z)) ds \right ), \qquad  t\ge0, \, z\in \mathbb{D}.
		\end{equation}
		
		\item[(ii)] The generator $A$ of the semigroup $\{W_t\}_{t\geq 0}$ and its domain are given by
		\begin{equation}\label{generator eq}
		Af=G\cdot f'+g \cdot f,
		\quad
		\cDA=\left \{ f\in X :\; G\cdot f'+g \cdot f \in X \right \}.
		\end{equation}
	\end{enumerate}
\end{thm}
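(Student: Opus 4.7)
The plan is to exploit the semigroup property of $\{W_t\}$ together with the density of $\cDA$ in $X $ and the continuity of point evaluations, then upgrade pointwise information to the full ODE structure using the cocycle identity. First, applying the identity $W_{t+s}f = W_s W_t f$ at a point $z \in \bD$ and choosing $f \equiv 1 \in \cO(\ovr \bD) \subset X $ (which is available by $(\star)$) yields immediately
\[
m_{t+s}(z) = m_s(z)\, m_t(\varphi_s(z)),
\]
which is the cocycle identity. Continuity of $(t,z) \mapsto m_t(z)$ (strong continuity together with $X \hookrightarrow \Hol(\bD)$), $m_0 \equiv 1$ (from $W_0 = I$), and non-vanishing of $m_t$ (a direct consequence of the cocycle identity, as in Konig's Lemma~2.1) confirm that $m_t$ is a cocycle for $\Phi$.

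Next, to define $g$ and show it is analytic, I would fix a nonzero $f \in \cDA$, available by density of $\cDA$ in $X $. For each $z \in \bD$ with $f(z) \neq 0$, point evaluation at $z$ is continuous on $X $, so $(W_t f(z) - f(z))/t \to Af(z)$ as $t \to 0^+$. Splitting
\[
\frac{W_t f(z) - f(z)}{t} = m_t(z)\cdot \frac{f(\varphi_t(z)) - f(z)}{t} + f(z)\cdot \frac{m_t(z) - 1}{t}
\]
and using $m_t(z) \to 1$ together with $(\varphi_t(z) - z)/t \to G(z)$, the first summand converges to $G(z)\, f'(z)$, forcing the second quotient to converge to $g(z) := (Af(z) - G(z)\, f'(z))/f(z)$, which is holomorphic on $\bD \setminus \{z : f(z) = 0\}$. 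Varying $f$ over $\cDA$---whose density in $X \hookrightarrow \Hol(\bD)$ ensures that every point of $\bD$ admits some $f \in \cDA$ with $f(z) \neq 0$---extends $g$ consistently to an element of $\Hol(\bD)$. Differentiating the cocycle identity in $t$ at $t = 0$ then gives the ODE $\partial_s m_s(z) = m_s(z)\, g(\varphi_s(z))$ with $m_0 \equiv 1$, integrating to the exponential formula \eqref{weight expression} and completing part~(i). The same pointwise computation yields $Af = G\cdot f' + g\cdot f$ for every $f \in \cDA$, establishing the inclusion $\cDA \subseteq \{f \in X : G\cdot f' + g\cdot f \in X\}$.

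The main obstacle will be the reverse inclusion in (ii): showing that $f \in X $ with $G\cdot f' + g\cdot f \in X $ necessarily lies in $\cDA$. My plan is to introduce $A_0 f := G\cdot f' + g\cdot f$ on its maximal domain in $X $, observe that $A_0$ is closed (because $X \hookrightarrow \Hol(\bD)$ forces $f_n \to f$ in $X$ to imply $f_n' \to f'$ on compacta, hence the graph closes), and note $A \subseteq A_0$. Fixing $\lambda \in \rho(A)$, the bijectivity of $\lambda I - A : \cDA \to X $ reduces the equality $\cD(A_0) = \cDA$ to injectivity of $\lambda I - A_0$ on $\cD(A_0)$. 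Any $h \in \ker(\lambda I - A_0)$ satisfies the first-order linear ODE $G\, h' = (\lambda - g)\, h$, so is of the form $h(z) = c\exp\bigl(\int (\lambda - g)/G\bigr)$, and one must argue that for $\lambda$ large enough no such solution can lie in $X $ unless $c = 0$. This is the delicate step: it requires an interplay between the growth of the explicit solution near the boundary (or near the Denjoy--Wolff point of $\Phi$) and the norm structure of $X $ through condition $(\star)$, and is precisely where the Konig/Siskakis argument for $H^p$ must be transplanted to the abstract setting.
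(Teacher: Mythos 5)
First, a point of reference: the paper does not actually prove this theorem --- it states that the result is essentially contained in K\"onig \cite{Konig} (and \cite{Sis2} for coboundaries) and that the proof carries over verbatim to spaces satisfying $(\star)$. Measured against that standard argument, your part (i) and the inclusion $\cDA\subseteq\{f\in X: G\cdot f'+g\cdot f\in X\}$ in part (ii) are correct and are essentially the classical computation: applying $W_{t+s}=W_sW_t$ to $\cte$ gives the cocycle identity, the splitting of the difference quotient (using continuity of point evaluations, $m_t(z)\to1$, and $(\varphi_t(z)-z)/t\to G(z)$) identifies $g$ pointwise and shows it is holomorphic, and differentiating the cocycle identity in $t$ at $t=0$ yields the exponential formula. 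These steps are sound.

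The gap is in the reverse inclusion of (ii). Your reduction to the injectivity of $\lambda I-A_0$ on the maximal domain is legitimate, but the step you yourself flag as delicate --- ruling out $h=c\exp\bigl(\int(\lambda-g)/G\bigr)\in X$ by growth estimates near the boundary --- is left entirely unproved, and it is also the wrong tool: under the sole hypothesis $(\star)$ there is no control on the boundary growth of that explicit solution (nor, for that matter, a guarantee that $(\lambda-g)/G$ has a single-valued primitive), so the ``transplantation'' of the $H^p$ estimates cannot be carried out at this level of generality. The standard and soft way to close the argument uses only what you already established in (i): if $h$ is holomorphic and $G h'+gh=\lambda h$ pointwise, then the identities $\partial_t\varphi_t(z)=G(\varphi_t(z))$ and $\partial_t m_t(z)=m_t(z)\,g(\varphi_t(z))$ give $\partial_t W_th(z)=m_t(z)\bigl(Gh'+gh\bigr)(\varphi_t(z))=\lambda\,W_th(z)$, hence $W_th=e^{\lambda t}h$; taking $\Re\lambda$ larger than the growth bound of $\{W_t\}$ forces $h=0$. (Equivalently, the same pointwise identity $\partial_tW_tf(z)=W_t(A_0f)(z)$ for $f\in\cD(A_0)$ yields $W_tf-f=\int_0^tW_s(A_0f)\,ds$ as a Bochner integral in $X$, which gives $f\in\cDA$ directly and dispenses with the resolvent argument altogether.) Until one of these is inserted, your proof of (ii) is incomplete.
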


\smallskip

Roughly speaking, our main result in Section \ref{Section 2} can be seen as the converse of this theorem.

\smallskip

Finally, we remark that whenever the cocycle $m$ for $\Phi$ is \emph{bounded}, that is, for every $t\geq 0$
the analytic function $m_t$ is bounded on $\mathbb{D}$, the weighted composition operator $W_t$ is a bounded
operator on the Hardy spaces $H^p$ for every $1\leq p\leq \infty$.
In the opposite direction, if $g$ is analytic with $\Re g(z)<M<\infty$ on $\mathbb{D}$, then
the above formulas induce a strongly continuous semigroup $\{W_t\}_{t\geq 0}$.

\section{$C_0$-semigroups of weighted composition operators: Generators} \label{Section 2}

In this section we will prove that in a large class of Banach spaces $X $ of analytic functions in
the unit disc $\mathbb{D}$, including Hardy spaces $H^p$ $(1\leq p<\infty)$, an (unbounded) operator $A$
 defined by \eqref{generator eq}
generates a $C_0$-semigroup of weighted composition operators if and only
if it generates a $C_0$-semigroup of operators.

\begin{thm} \label{thm-main}
Suppose $X $ is a Banach space of analytic functions on $\bD$,
satisfying $(\star)$.

Let $\{S_t\}_{t\geq 0}$ be a $C_0$-semigroup on $X $, whose generator $A$ is defined
by \eqref{generator eq},
where $G,\, g$ are analytic functions in $\bD$.
Then there exists a holomorphic flow
$\Phi=\{\varphi_t\}_{t\geq 0}$, whose infinitesimal generator is $G$ and a cocycle $m$ for $\Phi$,
satisfying  \eqref{m-and-g} and \eqref{weight expression}, such that $S_t$ is the weighted composition operator
\begin{equation*}
S_t f(z)= m_t(z) f(\varphi_t(z))
\end{equation*}
for every $t\geq 0, z\in \mathbb{D}$ and  $f\in X $.
\end{thm}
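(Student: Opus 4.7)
The plan is to reverse-engineer the flow $\{\varphi_t\}$ and the cocycle $\{m_t\}$ from the semigroup $\{S_t\}$ acting on a couple of distinguished test functions, and then verify the weighted composition formula $S_t f = m_t \cdot (f \circ \varphi_t)$ by solving, via the method of characteristics, the first-order transport PDE satisfied by $F(t,z) := (S_t f)(z)$ when $f \in \cD(A)$.

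\textbf{Step 1: Extracting the flow and the cocycle.} By $(\star)$ the constant function $\mathbbm{1}$ and the identity $h(z)=z$ both belong to $X$, so I would set
\[
m_t(z) := (S_t\mathbbm{1})(z), \qquad \varphi_t(z) := \frac{(S_t h)(z)}{m_t(z)}
\]
at those $z$ where $m_t(z)\neq 0$. Strong continuity of $\{S_t\}$ combined with the continuity of the embedding $X \hookrightarrow \Hol(\bD)$ forces $m_t(z) \to 1$ and $\varphi_t(z) \to z$ locally uniformly on $\bD$ as $t \to 0^+$, so both objects are well-defined holomorphic functions of $z$ for small $t$. Applying the semigroup identity $S_{t+s}=S_t S_s$ to $\mathbbm{1}$ and to $h$ would yield the cocycle identity $m_{t+s}(z)=m_s(z)\,m_t(\varphi_s(z))$ and the composition law $\varphi_{t+s}=\varphi_t\circ\varphi_s$. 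I would then bootstrap in $t$, iterating these identities starting from the small-$t$ regime, to propagate both the non-vanishing of $m_t$ and the self-mapping property $\varphi_t(\bD)\subseteq\bD$ to all $t\ge 0$.

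\textbf{Step 2: The transport PDE and characteristics.} For $f\in\cD(A)$, the identity $\frac{d}{dt}S_t f = A S_t f$ together with the form $A = G\partial_z + g$ gives, pointwise in $z$, the first-order linear PDE
\[
F_t(t,z)=G(z)\,F_z(t,z)+g(z)\,F(t,z),\qquad F(0,z)=f(z).
\]
Its characteristics satisfy $\dot\zeta(s)=-G(\zeta(s))$, which after time reversal are precisely the forward trajectories $s\mapsto\varphi_s(z_0)$ of the flow constructed in Step~1. Tracing the characteristic ending at $(t,z_0)$ backwards to $(0,\varphi_t(z_0))$ and integrating $\dot F=gF$ along it produces
\[
F(t,z_0)=f(\varphi_t(z_0))\exp\Bigl(\int_0^t g(\varphi_s(z_0))\,ds\Bigr)=m_t(z_0)\,f(\varphi_t(z_0)),
\]
so $S_t f = m_t\cdot(f\circ\varphi_t)$ holds on the dense subspace $\cD(A)$. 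Continuity of point evaluation on $X$ then extends the identity to every $f\in X$, and Theorem~\ref{siskakis-konig-thm} identifies $G$ as the infinitesimal generator of $\{\varphi_t\}$, identifies $g$ with $\partial_t m_t|_{t=0}$, and supplies the explicit cocycle formula \eqref{weight expression}.

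\textbf{Main obstacle.} The decisive point will be the self-mapping and non-vanishing claims of Step~1. Neither can be read off the function $G$ alone — for instance $G\equiv 1$ does not generate any flow on $\bD$ — so the only input available to guarantee them is the assumption that $A=G\partial_z+g$ generates a $C_0$-semigroup on $X$, and the semigroup property must therefore be used in an essential way, presumably in tandem with the embedding $\cO(\overline{\bD})\hookrightarrow X$. A secondary subtlety is that $\mathbbm{1}$ and $h$ may themselves fail to lie in $\cD(A)$ (since $g$ and $G+gh$ need not belong to $X$), so the time derivatives at $t=0$ of $m_t$ and $\varphi_t$ will have to be accessed indirectly, for example through the resolvent $(\lambda-A)^{-1}$ for $\lambda$ in the resolvent set of $A$, rather than by applying $A$ to $\mathbbm{1}$ and $h$ directly.
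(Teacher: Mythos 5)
Your strategy is essentially the paper's --- set $m_t=S_t\mathbbm{1}$ and $\varphi_t=S_t(\id)/m_t$, integrate $\partial_t(S_tf)=A(S_tf)$ along the characteristics $\dot\zeta=-G(\zeta)$ for $f\in\cD(A)$ (your Step~2 is literally the paper's Claim~1), and extend by density --- but as written it has two genuine gaps. The first is a circularity in Step~1: applying $S_{t+s}=S_tS_s$ to $\mathbbm{1}$ gives $m_{t+s}=S_t(m_s)$, and turning the right-hand side into $m_t\cdot(m_s\circ\varphi_t)$ requires already knowing that $S_t$ acts as a weighted composition operator on the function $m_s$, which is neither $\mathbbm{1}$ nor $h$; the same objection applies to the flow law $\varphi_{t+s}=\varphi_t\circ\varphi_s$, and, in Step~2, to the identification of the characteristics of the transport equation (the local ODE flow of $G$) with the family $S_th/m_t$ of Step~1 --- that identification is precisely what has to be proved. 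The paper avoids this by first solving $\partial_t\phi_t=G(\phi_t)$, $\phi_0=\id$, locally (on a disc $D(0,r)$ and for $|t|<t_0$), running the characteristics computation there, and only afterwards using $S_t\mathbbm{1}$ and $S_t(\id)$ to continue $m_t$ and $\phi_t$ analytically to all of $\bD$; the semigroup law for $\phi_t$ comes from ODE uniqueness, and the cocycle identity for $m_t$ is obtained only at the end, once the representation holds on all of $X$.

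The second gap is the one you yourself label the ``main obstacle'' and leave unresolved, and it is the heart of the theorem: showing that $\phi_t$ is holomorphic on all of $\bD$ (a priori $S_t(\id)/m_t$ is only meromorphic) and that $|\phi_t(z)|<1$ for every $z\in\bD$, not merely on the small disc where the local ODE solution lives. The paper's device is to test $S_t$ on the monomials: the identity $\phi_t^n=S_t(z^n)/m_t$ shows that a pole of $\phi_t$ would be a pole of order at least $n$ of the left-hand side for every $n$, which is impossible; and the bound $|S_t(z^n)(a)|\le C_a\,Me^{\omega t}\|z^n\|_X$ together with $\limsup_n\|z^n\|_X^{1/n}\le1$ (this is exactly where $\cO(\overline\bD)\hookrightarrow X$ enters) forces $|\phi_t(a)|\le1$ for all $a\in\bD$, hence $<1$ by the open mapping theorem. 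Without some such argument the proof does not close, since, as you correctly observe, an arbitrary analytic $G$ need not generate a flow of self-maps of $\bD$. Your final worry, that $\mathbbm{1}$ and $h$ may fail to lie in $\cD(A)$, is harmless and needs no resolvent detour: the representation is proved on $\cD(A)$ and extended to all of $X$ by density (point evaluations being continuous) before it is ever applied to $\mathbbm{1}$ or $\id$.
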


In order to prove the result, as it was mentioned in the Introduction, we will make use of some
 ideas in \cite{Gallardo-Yakubovich}.  %(included here for the sake of completeness).
Note that the natural assumption $(\star)$ on $X $ was also
made in \cite{Gallardo-Yakubovich}.
\medskip

\begin{proof}
Assume the generator $A$ of the $C_0$-semigroup $\{S_t\}_{t\geq 0}$ is given
by $Af= G\cdot f'+g\cdot f$ where $G,\, g \in \Hol(\bD)$. We begin by posing the Cauchy
Problem as in \cite{Gallardo-Yakubovich}

$$
\eqno{(CP)}\qquad\qquad
\begin{cases}
\displaystyle \frac {\pt \phi_t(z)}{\pt t}=G(\phi_t(z)) \\
\phi_0(z)=z\qquad\qquad\qquad \big(z\in D(0,r)=\{z\in \bD: \; |z|<r\}\big),
\end{cases}
$$
where $r\in (0,1)$ is a fixed radius.  Hence, upon applying the theory of ordinary differential equations
in complex domain, there exists $t_0>0$
and an analytic solution $\{\phi_t(z)\}$ of (CP), defined for
$z\in D(0,r)$ and all complex $t$, $|t|<t_0$.
In particular, for these values of $z$ and $t$, $|\phi_t(z)|<1$.

Moreover, this solution is unique in the class of
smooth functions and has the semigroup property:
\begin{equation}\label{semigr-pr}
\phi_{t+s}(z)=\phi_{t}\circ\phi_{s}(z)
\end{equation}
whenever
$t,s,t+s\in(-t_0,t_0)$ and $z,\phi_{s}(z)\in D(0,r)$.

Choose $r'\in (0,r)$ such that
$\phi_{t}(z)\in D(0,r)$ for
$t\in (-t_0,t_0)$ and $z\in D(0,r')$
(here we may need to replace $t_0$ with a smaller positive number).
Then
\beqn
\label{phit}
\phi_{-t}\circ\phi_{t}(z)=z\qquad \text{if $t\in (-t_0,t_0)$ and $z\in D(0,r')$.}
\eeqn

First, we prove the following:

\

\noindent \underline{Claim 1:}
For any $f\in\cDA$
\beqn
\label{star}
% \eqno{(*)}
\qquad
S_t f(z)=  \exp \left ( \int_0^t g(\varphi_s (z)) ds \right )\, (f\circ \phi_t)(z),
    \qquad z\in D(0,r), \, 0\le t< t_0.
\eeqn

\

\

\begin{proof}[Proof of Claim 1]
By (CP) we have $$\frac {\pt \phi_{-t}(z)}{\pt
t}=-G(\phi_{-t}(z)),$$
for any $z\in D(0,r)$ and any $t\in (0,t_0)$.  Let us fix $f\in \cDA$ and denote by
\[
f _t(z)=S_t f(z), \quad z\in\bD, \; t\ge0.
\]
Calculating the derivative of $f_t\circ\phi_{-t}(z)$ with respect to $t$  we obtain
\begin{eqnarray*}
\displaystyle
\frac {\pt (f_t \circ \phi_{-t})(z)}{\pt t}
&= & \displaystyle \frac{\pt f_t}{\pt t}\big(\phi_{-t}(z)\big) + \frac{\pt f_t}{\pt z}\big(\phi_{-t}(z)\big) \,
\frac{\pt \phi_{-t}(z)}{\pt t} \nonumber \\
\noalign{\medskip}
&= & \big(A  f_t\big) \big(\phi_{-t}(z)\big) - \frac{\pt f_t}{\pt z} (\phi_{-t}(z)) G (\phi_{-t}(z)) \nonumber \\
\noalign{\medskip}
&= & \displaystyle G (\phi_{-t}(z)) \frac{\pt f_t}{\pt z} (\phi_{-t}(z)) + g (\phi_{-t}(z)) f_t (\phi_{-t}(z))
- \frac{\pt f_t}{\pt z} (\phi_{-t}(z)) G (\phi_{-t}(z)),
\end{eqnarray*}
where the second line follows since $A$ is the generator of the semigroup,
 and hence $\displaystyle \frac {\pt f
_t}{\pt t} = Af _t$. Accordingly,
\begin{equation} \label{dif eq}
\frac {\pt (f_t \circ \phi_{-t})(z)}{\pt t}= g (\phi_{-t}(z)) f_t (\phi_{-t}(z)), \quad z\in D(0,r), \; 0\leq t<t_0.
\end{equation}
For $z\in D(0,r)$, let us write $Y(t)=(f_t \circ \phi_{-t})(z)$ and $\psi(t)=g (\phi_{-t}(z))$. Then \eqref{dif eq}
is the differential equation
$$
Y'(t)= \psi(t)\, Y(t),
$$
so
$$
Y(t)\, \exp \left (- \int_0^t \psi(u) \,du \right ) = C_z,
$$
where $C_z$ is a constant (depending on $z$). Thus, for each $z\in  D(0,r)$
$$
(f_t \circ \phi_{-t})(z)= C_z \, \exp \left (\int_0^t g (\phi_{-u}(z)) \,du \right ).
$$
For $t=0$ we have $C_z=f_0( \phi_{0} (z))= f(z)$ and therefore
\begin{equation}\label{eq expression}
(f_t \circ \phi_{-t})(z)=  \exp \left (\int_0^t g (\phi_{-u}(z)) \,du \right )\, f(z)
\end{equation}
for $z\in D(0,r)$.
Replacing $z$ by $\varphi_t(z)$ in \eqref{eq expression}
we deduce by means of analyticity that
\begin{eqnarray*}
f_t(z)&=& \exp \left (\int_0^t g (\phi_{t-u}(z)) \,du \right )\, f(\phi_t(z))\\
&=& \exp \left (\int_0^t g (\phi_{s}(z)) \,ds \right )\, f(\phi_t(z)),
\end{eqnarray*}
for all $z\in D(0,r)$. This proves  Claim 1.
\end{proof}
\medskip

Now, let $f\in X $. There exists a sequence $\{f_n\}\subset \cDA$ such that $f_n\to f$ in $X $
as $n\to \infty$. Hence, $f_n\to f$ uniformly on compact subsets of $\mathbb{D}$ as $n\to \infty$, and therefore
$$
\lim_{n\to \infty} S_t f_n(z)= S_t f(z)
$$
for each $z\in \mathbb{D}$ and $t\geq 0$. Accordingly, by Claim 1, for any $f\in X $
\begin{equation}\label{eq general function}
S_t f(z)= \exp \left ( \int_0^t g(\varphi_s (z)) ds \right )\, (f\circ \phi_t)(z), \qquad z\in D(0,r), \, 0\le t< t_0.
\end{equation}

Now, upon applying \eqref{eq general function} to the constant function $\cte(z)=1$, we deduce
\begin{equation}\label{local cocycle}
S_t (\cte)(z)= \exp \left ( \int_0^t g(\varphi_s (z)) ds \right )\, %  \in X
\end{equation}
for  $z\in D(0,r), \, 0\le t< t_0.$ \textit{Define}
\begin{equation}\label{global cocycle}
m_t(z):=S_t (\cte)(z)  \quad z\in \bD, \, t\ge 0.
\end{equation}
Clearly, this definition agrees with \eqref{local cocycle} if $0\le t<t_0$ and
$|z|<r$.  Therefore $m_t(z)\ne 0$ for
$0\le t< t_0$ and $|z|<r$. Moreover, $m_t\in X $ and in
particular, $m_t$ is analytic on $\bD$, for any $t\ge 0$.

Now apply  \eqref{eq general function} to the identity  function $\id$, $\id(z)\equiv z$. We get
\begin{equation*}
S_t (\id)(z)=\exp \left ( \int_0^t g(\varphi_s (z)) ds \right )\, \phi_t(z) \in X
\end{equation*}
for  $z\in D(0,r)$ and $0\le t< t_0$. Therefore
\begin{equation}\label{phit local}
\phi_t(z)= \frac 1 {m_t(z)}\, S_t (\id)(z).
\end{equation}
for every $z$, $|z|<r$ and $0\le t< t_0$.
Hence for any such $t$,
$\phi_t$ continues to a meromorphic function in $\mathbb{D}$.

\medskip

Analogously, applying \eqref{eq general function} to the functions $z^n$ for $n\geq 2$,
it follows for every $a\in \mathbb{D}$
\begin{equation}\label{phitn local}
(\phi_t(a))^n= \frac 1 {m_t(z)}\, S_t (z^n)(a),
\end{equation}
for $0\le t< t_0$. If for some
$0\le t< t_0$, $\phi_t$ had a pole in a point $a_0\in \mathbb{D}$,
then $\phi_t^n$ would have at least $n$th order pole at $a_0$ for
any $n\ge 1$, which contradicts  \eqref{phitn local}. Therefore
$\phi_t$ is analytic in $\mathbb{D}$ for any $t$, $0\le t< t_0$.

Now, a similar argument to that in \cite[Claim 2]{Gallardo-Yakubovich} along
with \eqref{phitn local} yields that there exists a positive $t_1 \le t_0$ such that
$|\phi_t(z)|< 1$ for all $z\in\bD$ and all $0\le t< t_1$.
\medskip

Finally, observe that from \eqref{semigr-pr} we get
\[
\phi_s\circ\phi_t(z)=\phi_{s+t}(z)
\]
for $s, t\ge 0$, $s+t<t_1$ and all $z\in\bD$.
Since the family $\{\phi_t\}$ satisfies (CP), it follows that
it can be continued to a holomorphic flow,
defined on $[0,+\infty)\times \bD$ (see \cite[Proposition 3.3.1]{Sh}).
Now we can assert that the right hand side of \eqref{eq general function} defines a semigroup
on the space $\operatorname{Hol}(\bD)$. Since
this semigroup coincides with the semigroup $\{S_t\}$ for $0\le t<t_0$,
if follows that \eqref{eq general function} holds for all $t>0$ and all $z\in \bD$.
In particular, by applying \eqref{eq general function} to the function $\cte$,
we get \eqref{weight expression}.

The rest of the proof runs as in \cite{Gallardo-Yakubovich}, which yields the statement of the Main Theorem.
\end{proof}

\begin{rem}
Note that, by hypotheses, the weighted composition operators act boundedly on $X $. Nevertheless,
we point out that even in the Hardy spaces $H^p$ it is unknown how to characterize bounded weighted composition
operators in terms of the inducing symbols (see \cite{GGKP} for related discussions).
\end{rem}

\begin{rem}
It is to be noticed that our results have consequences for spaces of analytic functions on domains other than the
disc. More precisely let $\Om\subsetneq\bC$ be a simply connected  domain and suppose $X$ is a Banach space
consisting of functions analytic on $\Om$, which satisfies the analogous condition $(\star)$ for $\Om$. Suppose
$\{S_t\}_{t\geq 0}$ is a $C_0$-semigroup acting on $X$, with infinitesimal generator of the form
$$
Af= G\cdot f' +g\cdot f, \quad \mathcal{D}(A)=\{f\in X:  G\cdot f' +g\cdot f\in X\}
$$
where $G$ and $g$ are analytic on $\Om$. Let $h:\bD\to \Om$ is a fixed conformal map onto $\Om$. Consider the Banach space
$X(\bD)=\{F=f\circ h: f\in X\}$ with the transferred norm $\|F\|_{X(\bD)}=\|f\|_{X}$ and assume that
$X(\bD)$ satisfies $(\star)$. The composition operator  $C_h(f)=f\circ h$ is an onto isometry between $X$ and $X(\bD)$,
with $C_h^{-1}=C_{h^{-1}}$. It is then clear that the family of operators $\{T_t\}_{t\geq 0}$,
$$
T_t = C_h\circ S_t\circ C_{h^{-1}}
$$
is a $C_0$-semigroup on $X(\bD)$. Its infinitesimal generator $\Gamma$, for $F\in \mathcal{D}(\Gamma)$,  is
\begin{align*}
\Gamma F&= \lim_{t\to 0^{+}}\frac{T_tF-F}{t}=\lim_{t\to 0^{+}}\frac{C_h\circ S_t\circ C_{h^{-1}}(F)-F}{t}\\
&=\lim_{t\to 0^{+}}\frac{ S_tf\circ h-f\circ h }{t}=(Af)\circ h, \quad \text{where} \,f=F\circ h^{-1}\in X,\\
&= G\circ h\cdot  f'\circ h+ g\circ h\cdot f\circ h\\
&= \frac{1}{h'} G\circ h\cdot F' + g\circ h\cdot F\\
&= G_1\cdot F' + g_1\cdot F,
\end{align*}
where $G_1(z) = \frac{1}{h'(z)} G(h(z))$ and $g_1(z) =g(h(z))$ are analytic in $\bD$. Thus by Theorem
\ref{thm-main} there is a holomorphic flow $\{\phi_t\}$ and a cocycle $m$ for $\{\phi_t\}$  such that
$$
T_tF(z)=m_t(z)F(\phi_t(z)), \quad t\geq 0, \,  z\in \bD, \,  F\in X(\bD).
$$
So we deduce that
\begin{align*}
S_t(f)(z) &=( C_{h^{-1}}\circ T_t\circ C_h)(f)(z)\\
&=m_t(h^{-1}(z)) ( f\circ h\circ\phi_t\circ h^{-1})(z)\\
&= \mu_t(z)f(\psi_t(z))
\end{align*}
for $f\in X$ and $z\in \Om$ where $\psi_t =h\circ\phi_t\circ h^{-1}$ is a holomorphic flow in $\Om$ and
$\mu_t(z)=m_t(h^{-1}(z))$ is a cocycle in $\Om$ for $\{\psi_t\}$.

More generally,
let $h$ is as before and let $\tau$ be a zero-free holomorphic function on $\bD$.
Then the same observation applies if $W_{h,\tau}$ is the weighted composition operator
$W_{h,\tau}(f)=\tau\cdot(f\circ h)$, and
the space $X(\bD)$, defined as $X(\bD)=\{W_{h,\tau} f: \;  f\in X\}$, satisfies $(\star)$. This remark can be applied, for instance, to
Hardy and to Smirnov spaces of bounded and unbounded domains (see ~\cite{Duren-book} for the corresponding definitions).
\end{rem}

\section{Absence of non-trivial  $C_0$-semigroups of weighted
composition\\ operators in non-separable Banach function spaces
}\label{Section 3}

As mentioned in the introduction, in this section we deal with Banach spaces $X$
of analytic functions on $\mathbb{D}$ such that
\begin{equation}\label{containment}
H^\infty \subseteq X \subseteq \Bloch.
\end{equation}
 Recall that  the Bloch space $\Bloch$ is a Banach space endowed with the norm
\begin{equation}\label{Bloch norm}
\|f\|_{\Bloch}= |f(0)|+ \sup_{z\in \mathbb{D}} |f'(z)| (1-|z|^2), \qquad (f\in \Bloch).
\end{equation}
It will be showed below that such space $X$ is always non-separable (see Remark \ref{non-separability}).

In \cite{BCDM}, the authors showed that no non-trivial holomorphic flow $\Phi=\{\varphi_t\}_{t\geq 0}$
induces a $C_0$-semigroup of composition operators on $H^{\infty}$ or $\Bloch$  by considering an argument
involving the Dunford-Pettis property. More recently,  Anderson, Jovovic and Smith ~\cite{AJS} proved that the
same holds for the space BMOA as well as for any Banach space $X$ satisfying \eqref{containment} by means of
a  geometric function theory  argument. Based on it, we will prove the following

\begin{thm} \label{main-thm-2}
Let $X $ be a Banach space of analytic functions on $\mathbb{D}$ such that
$H^\infty \subseteq X \subseteq \Bloch$.  Assume that $\Phi=\{\phi_t\}_{t\geq 0}$ is a nontrivial
 holomorphic flow. Let $\{W_t\}_{t\geq 0}$ be any weighted composition semigroup induced by $\Phi$, namely,
 suppose there exists a cocycle $m$ for $\Phi$ such that
\begin{equation}
W_t f(z)=m_t(z) f(\varphi_t(z)), \qquad (z\in \mathbb{D}),
\end{equation}
for any $f\in X $. Then $\{W_t\}_{t\geq 0}$ is no longer a strongly continuous semigroup in $X$.
\end{thm}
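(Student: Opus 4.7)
My plan is to argue by contradiction, reducing the weighted case to the unweighted one, which is handled in~\cite{AJS}.

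Assume that $\{W_t\}_{t\ge 0}$ is a $C_0$-semigroup on $X$. By Theorem~\ref{siskakis-konig-thm}, the cocycle is necessarily $m_t=W_t(\cte)$ with the explicit form $m_t(z)=\exp\bigl(\int_0^t g(\phi_s(z))\,ds\bigr)$ for some $g\in\Hol(\bD)$. Applying the strong continuity hypothesis to $\cte\in H^\infty\subseteq X$ yields $\|m_t-\cte\|_X\to 0$, and the continuous embedding $X\hookrightarrow\Bloch$ then gives $\|m_t-\cte\|_\Bloch\to 0$; in particular $m_t\to\cte$ uniformly on every compact subset of $\bD$.

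Next I would invoke the Anderson--Jovovic--Smith theorem from~\cite{AJS}: for every Banach space $X$ with $H^\infty\subseteq X\subseteq\Bloch$ and every nontrivial holomorphic flow, the unweighted composition semigroup $\{C_t\}$ is not strongly continuous on $X$. Their geometric function-theoretic argument can be arranged to produce a bounded test function $f_0\in H^\infty\subseteq X$ and a sequence $t_n\downarrow 0$ with $\|C_{t_n}f_0-f_0\|_X\ge c_0>0$. Fixing this $f_0$ and observing that $C_tf_0=f_0\circ\phi_t\in H^\infty\subseteq X$ while $W_tf_0\in X$ by hypothesis, the algebraic identity
\[
C_tf_0-f_0 \;=\; (W_tf_0-f_0) \;-\; (m_t-\cte)\,C_tf_0
\]
makes sense in $X$. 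Strong continuity of $\{W_t\}$ gives $\|W_tf_0-f_0\|_X\to 0$, so once the product estimate
\begin{equation*}
\bigl\|(m_t-\cte)\,C_tf_0\bigr\|_X\longrightarrow 0\qquad(t\to 0^+)
\end{equation*}
is established, the identity forces $\|C_tf_0-f_0\|_X\to 0$, contradicting the AJS lower bound.

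The step I expect to be the main obstacle is precisely this product estimate. The difficulty is that $H^\infty$ is in general not contained in the multiplier algebra $\operatorname{Mult}(X)$: most strikingly, when $X=\Bloch$ only a proper subalgebra of $H^\infty$ acts boundedly by pointwise multiplication, so the naive bound $\|(m_t-\cte)\,h\|_X\lesssim\|m_t-\cte\|_X\,\|h\|_{H^\infty}$ is not available. My plan to handle this is to refine the AJS construction so that the test function $f_0$ lies in $H^\infty\cap\operatorname{Mult}(X)$ and $\{C_tf_0\}_{0\le t\le 1}$ is uniformly bounded in $\operatorname{Mult}(X)$. For every $X$ in the range $H^\infty\subseteq X\subseteq\Bloch$ the algebra $\operatorname{Mult}(X)$ is rich enough to accommodate such a choice (one can take, for instance, inner functions or outer functions with suitable boundary regularity), and this reduces the product estimate to the cocycle convergence $\|m_t-\cte\|_X\to 0$ already in hand.
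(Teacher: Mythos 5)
There is a genuine gap, and it sits exactly at the step you flag as the main obstacle: the product estimate cannot be repaired in the way you propose. Your plan needs a single test function $f_0$ that simultaneously (a) witnesses the failure of strong continuity of the unweighted semigroup, i.e.\ $\limsup_{t\to0^+}\|C_tf_0-f_0\|_{X}\ge c_0>0$, and (b) lies in $\operatorname{Mult}(X)$ with $\{C_tf_0\}$ uniformly bounded there. For $X=\Bloch$, which the theorem must cover, these two requirements are incompatible. The multiplier algebra of $\Bloch$ is $\{f\in H^\infty:\ \sup_z|f'(z)|(1-|z|^2)\log\tfrac{2}{1-|z|^2}<\infty\}$, which is contained in the little Bloch space $\Bloch_0$; and $\Bloch_0\subseteq[\phi_t,\Bloch]$ for every holomorphic flow, so every $f_0\in\operatorname{Mult}(\Bloch)$ already satisfies $\|C_tf_0-f_0\|_{\Bloch}\to0$ and can witness nothing. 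In particular the interpolating Blaschke products that drive the Anderson--Jovovic--Smith lower bound are precisely \emph{not} multipliers of $\Bloch$ (by Lemma~\ref{lem-GPV} their derivatives are of size $(1-|a_n|)^{-1}$ near the zeros), and your suggestion to use inner functions runs into the same wall. Without the multiplier hypothesis the estimate genuinely fails: $\|m_t-\cte\|_{\Bloch}\to0$ controls $|m_t(z)-1|$ only up to a factor $\log\tfrac{1}{1-|z|}$, while $|(C_tf_0)'(z)|(1-|z|^2)$ is merely bounded, so $\|(m_t-\cte)\,C_tf_0\|_\Bloch$ need not be small.

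The paper sidesteps the product estimate entirely by a direct construction rather than a reduction to the unweighted case. One builds $f=\wt B\,\widehat B^{\,2}\in H^\infty$, where $\wt B$ has zeros $r_n\nearrow1$ and $\widehat B$ has zeros $\phi_{t_n}(r_n)$, the combined sequence being interpolating. Because $f$ has a \emph{double} zero at $\phi_{t_n}(r_n)$, both $f(\phi_{t_n}(r_n))$ and $f'(\phi_{t_n}(r_n))$ vanish, so in
\[
\big(m_t\,(f\circ\phi_t)\big)'=m_t\,(f'\circ\phi_t)\,\phi_t'+m_t'\,(f\circ\phi_t)
\]
the weight and its derivative are annihilated at $z=r_n$, $t=t_n$, leaving $\|W_{t_n}f-f\|_\Bloch\ge|f'(r_n)|(1-r_n)\ge\beta>0$ by Lemma~\ref{lem-GPV}. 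No information about the cocycle beyond its existence is used. If you want to salvage your argument, this is the idea to import: kill the weight with zeros of the test function rather than trying to bound it as a multiplier.
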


	We remark that for any reasonable Banach space $X$ of analytic functions in $\bD$, the Banach
 space $[\phi_t, X]\subset X$ is defined; it is the maximal
	subspace of $X$, on which the unweighted composition semigroup $\{C_t\}$ is strongly continuous.
	The recent work
    \cite{ChalmDask2021arx} is devoted to a description of
    flows $\{\phi_t\}$ such that the spaces $[\phi_t,\Bloch]$ and $[\phi_t, BMOA]$ are the minimal ones, that is,
    $[\phi_t,\Bloch]=\Bloch_0$ and $[\phi_t, BMOA]=VMOA$.

In the proof of Theorem \ref{main-thm-2}, we will make use the following lemma, which can be extracted from
\cite{AJS}, see also \cite{GMS}.

\begin{lem}
	\label{lem1}
	For any holomorphic flow $\{\phi_t\}_{t\geq 0}$, one of the two statements holds:
	\begin{enumerate}
	\item There is a point $\ga_0\in\pt\bD$ such that
	\[
	\exists \lim_{r\to 1^-} \phi_t(r\ga_0):=\phi_t(\ga_0), \quad t>0,
	\]
	and $\phi_t(\ga_0)\in \bD$ for any $t>0$;
	
	\item $\phi_t$ is an automorphism of $\bD$ onto itself for any $t\ge 0$.
	
\end{enumerate}
\end{lem}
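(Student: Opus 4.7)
The plan is to prove the dichotomy by separating holomorphic flows according to whether they consist of automorphisms, using the Berkson-Porta structure theorem for generators. Recall that the infinitesimal generator $G$ admits the factorization $G(z)=(\tau-z)(1-\bar\tau z)F(z)$ with $\tau\in\overline{\bD}$ (the common Denjoy-Wolff point) and $\Re F\ge 0$ on $\bD$. The flow $\{\phi_t\}$ consists entirely of automorphisms of $\bD$ precisely when $G$ is an infinitesimal M\"obius generator, i.e., when $F$ is a pure imaginary constant. If this holds we are in case~(2) and we are done, so the substance of the proof is the opposite case.

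Suppose then that no $\phi_t$ with $t>0$ is an automorphism. (This dichotomy is rigid: if $\phi_{t_0}$ is an automorphism for some $t_0>0$, then for any $0<t<t_0$ we may factor $\phi_{t_0}=\phi_{t_0-t}\circ\phi_t$, and since both factors are self-maps of $\bD$, classical arguments force both to be automorphisms; iterating gives the conclusion for every $t>0$.) Each $\phi_t$ for $t>0$ is then a univalent self-map with $\phi_t(\bD)\subsetneq\bD$, and univalence follows from the local invertibility given by the ODE $\dot w=G(w)$ on compacts. Since $\pt\phi_t(\bD)\cap\bD\neq\emptyset$, Carath\'eodory's prime-end correspondence for the conformal map $\phi_t\colon\bD\to\phi_t(\bD)$ yields a non-empty open arc $A_t\subset\pt\bD$ on which the radial limit $\phi_t(\gamma)=\lim_{r\to1^-}\phi_t(r\gamma)$ exists and lies in $\bD$.

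To select a single $\gamma_0\in\bigcap_{t>0}A_t$, I would linearize the flow via the Koenigs model: there is a univalent $h\colon\bD\to\Omega\subset\bC$ conjugating $\phi_t$ to a one-parameter linear action $\sigma_t$ on $\bC$ (translation $w\mapsto w+ct$ when $\tau\in\pt\bD$, dilation $w\mapsto e^{tG'(\tau)}w$ when $\tau\in\bD$). Non-automorphy of the flow is equivalent to $\Omega$ being a \emph{proper} subset of the maximal $\sigma_t$-invariant region. Pick a prime end $P$ of $\Omega$ whose $\sigma_t$-orbit enters $\Omega$ interior-wise for all $t>0$; pulling $P$ back by $h^{-1}$ produces a single boundary point $\gamma_0\in\pt\bD$, and the identity $\phi_t(\gamma_0)=h^{-1}(\sigma_t(h(\gamma_0)))$ shows that the radial limit exists and is in $\bD$ for every $t>0$. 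This gives case~(1).

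The main obstacle is the coherent choice of $\gamma_0$ uniformly in $t>0$. For any fixed $t$, producing $\gamma\in A_t$ is easy, but the family $\{A_t\}$ is not obviously nested in the direction needed (as $t\downarrow 0$ the arcs shrink, and a priori $\bigcap_{t>0}A_t$ could be empty). The Koenigs linearization bypasses this by transporting the problem to the rigid geometry of a translation or dilation semigroup, where suitable prime ends are visible and carried into the interior of $\Omega$ for all positive times simultaneously. Separate bookkeeping is required for the three subcases (elliptic, parabolic, hyperbolic) according to the position of $\tau$ and the shape of $\Omega$, but in each case the linearity of $\sigma_t$ and the classical boundary correspondence for univalent maps complete the argument.
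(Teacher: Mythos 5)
Your proposal is correct in substance and follows essentially the same route as the paper, which derives the lemma from the K\"onig/Koenigs linearization model exactly as in the proof of Theorem~3.1 of Anderson--Jovovic--Smith: in the non-automorphic case one produces an accessible boundary point of the linearizing domain $\Omega$ whose forward orbit under the linear action $\sigma_t$ lies in $\Omega$ for all $t>0$, and pulls it back through $h$ to a single $\gamma_0\in\pt\bD$. One correction: your parenthetical claim that the flow consists of automorphisms precisely when $F$ in the Berkson--Porta representation is a purely imaginary constant fails for hyperbolic automorphism groups (for instance $G(z)=c(1-z^2)$ with Denjoy--Wolff point $\tau=1$ gives $F(z)=c(1+z)/(1-z)$, which is nonconstant with positive real part); this does not damage your proof, since you establish the needed dichotomy independently via the Schwarz--Pick rigidity of the factorization $\phi_{t_0}=\phi_{t_0-t}\circ\phi_t$.
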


This is implicit in the proof of Theorem 3.1 in \cite{AJS}, which is based on the K\"onig model of $\{\phi_t\}_{t\geq 0}$.
Namely, there is a Riemann map $h$ of $\bD$ onto a simply connected domain $\Om$ with $h(0)=0$ and a
 constant $c$, $\re c\ge 0$, such that either
\begin{equation}
\phi_t(z)= h^{-1}\big(e^{-ct}h(z)\big), \quad z\in \bD, \, t\ge0
\end{equation}
or
\begin{equation}
\phi_t(z)= h^{-1}\big(h(z)+ct\big), \quad z\in \bD, \, t\ge0.
\end{equation}
In the first case $\Om$ has to be spirallike,  $we^{-ct}\in \Omega$ for each $w\in \Omega$ and $t\geq 0$. In the
second one, $\Om$ is close to convex, $w+ct \in \Om$ for each $w\in \Omega$ and $t\geq 0$. We refer to \cite{Po75}
 or \cite{Sh} for more on the subject.

\medskip

A key ingredient in the proof will be a result which allows us to estimate the derivative of an infinite interpolating
 Blaschke products in pseudohyperbolic discs. Of particular help will be Lemma 3.5 from the paper \cite{GPV}
 by Girela, Pel\'aez and Vukotic. In order to formulate it, we recall that
given
$\theta\in\mathbb{R}$ and
a sequence of (not necessarily distinct) points
$\{z_k\}$ in $\mathbb{D}$
satisfying the \emph{Blaschke
condition} $$\sum_{k=1}^{\infty} (1-|z_k|)<\infty,$$ the infinite
product
$$
B(z) = e^{i\theta}\,\prod_{k=1}^{\infty} \frac{|z_k|}{z_k}
\frac{z_k-z}{1-\overline{z_k}z}
$$
converges uniformly on compact subsets of
$\mathbb{D}$.

For each $k\geq 1$, we will denote by $b_{z_k}$
the factor $\frac{|z_k|}{z_k}
\frac{z_k-z}{1-\overline{z_k}z}$.
Whenever $z_k=0$, the above expression
$|z_k|/z_k$ is taken to be equal to $-1$, so that
$b_{z_k}(z)=z$ in this case.
The holomorphic function $B$ is called the \emph{Blaschke product with zero sequence $\{z_k\}$}. It satisfies
\begin{itemize}
\item[i)] $B$ vanishes precisely at the points $\{z_k\}$, with
the corresponding multiplicities (that is, $\{z_k\}$ is the
zero-sequence of $B$), and

\item[ii)] $|B(z)|<1$ for every $z\in \mathbb{D}$.

\item[iii)] $\lim_{r\to 1^{-}} |B(re^{i\theta})|=1$ almost everywhere on the boundary $\partial
\mathbb{D}$.
\end{itemize}

Recall that a sequence $\{z_n\}$ is said to be interpolating if
for any bounded sequence of complex numbers $\{w_n\}$ there exists a function $f \in H^\infty$
such that $f(z_n) = w_n$ for all $n$.
A Blaschke product is interpolating if its zero sequence is interpolating (in this case, all $z_n$ are distinct).
It is known that $B$ is interpolating if and only if
there is a constant $\de>0$ such that
$|B_n(z_n)|>\de>0$ for all $n$, where $B_n=B/b_{z_n}$.
Whenever $1-|z_{n+1}|\le \al (1-|z_n|)$ for all $n$,
where $\al<1$ is a constant, the sequence $\{z_n\}$ is interpolating.

For these and other properties of Blaschke products, we refer  to Garnett's book \cite{Ga}.

In the next statement, $\Delta(a,r)$ will denote the pseudohyperbolic disc
$$\Delta(a,r)=\big\{z\in \bD: \quad \rho(z,a)<r \big\},$$
where $\rho(z,w)$ stands for the pseudohyperbolic
distance between two points $w,\, z$ in $\mathbb{D}$, namely,
$\rho(z,w)=\left |\frac{w-z}{1-\overline{w}z} \right |.$

\begin{lem}%[\cite{GPV}]
\label{lem-GPV}
Suppose $B$ is an infinite  Blaschke product in $\bD$. Let $\{a_n\}$ be a part of its zeros. Suppose
that it is an interpolating sequence and that
\[
|(B/b_{a_n})(a_n)|\ge \de>0.
\]

Then there are positive constants $\al$ and $\be$, which depend only on
$\de$, such that  the pseudo-discs $\Delta(a_n,\al)$, $n\ge 1$ are pairwise disjoint and
\[
|B'(z)|\ge \frac \be {1-|a_n|}, \quad z\in \Delta(a_n, \al).
\]
\end{lem}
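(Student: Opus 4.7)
My plan is to separate the lemma into two essentially independent issues: the pseudohyperbolic separation of the sequence $\{a_n\}$, which is responsible for the disjointness of the pseudo-discs, and a local lower bound on $|B'|$ near each $a_n$ coming from the factorization $B=b_{a_n}\cdot(B/b_{a_n})$.

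I would start by unpacking the hypothesis. Writing $B_n:=B/b_{a_n}$, the value $B_n(a_n)$ is, up to a unimodular constant, the product of $b_{z_k}(a_n)$ over all remaining zeros $z_k$ of $B$. Since every factor has modulus at most $1$, this gives
\[
\prod_{m\ne n}\rho(a_m,a_n)\;\ge\;|B_n(a_n)|\;\ge\;\delta,
\]
and hence $\rho(a_m,a_n)\ge\delta$ for every $m\ne n$. Because $\rho$ obeys the ordinary triangle inequality, any choice of $\alpha$ with $2\alpha<\delta$ forces the discs $\Delta(a_n,\alpha)$ to be pairwise disjoint.

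For the lower bound on $|B'(z)|$ inside $\Delta(a_n,\alpha)$ I would use
\[
B'(z)=b_{a_n}'(z)\,B_n(z)+b_{a_n}(z)\,B_n'(z)
\]
and treat the two pieces separately via Schwarz--Pick. The quotient $B_n$ is a holomorphic self-map of $\bD$ with $|B_n(a_n)|\ge\delta$, so $\rho(B_n(z),B_n(a_n))\le\rho(z,a_n)<\alpha$; an elementary computation with pseudohyperbolic discs then yields a lower bound $|B_n(z)|\ge\delta'(\alpha,\delta)>0$. Combined with the exact identity $(1-|z|^2)|b_{a_n}'(z)|=1-|b_{a_n}(z)|^2\ge 1-\alpha^2$ and the standard two-sided comparison $1-|z|^2\asymp 1-|a_n|^2$ on $\Delta(a_n,\alpha)$ (constants depending only on $\alpha$), this becomes $|b_{a_n}'(z)B_n(z)|\gtrsim \delta'/(1-|a_n|^2)$. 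For the other piece, Schwarz--Pick gives $|B_n'(z)|\le 1/(1-|z|^2)$ and $|b_{a_n}(z)|<\alpha$, so the error satisfies $|b_{a_n}(z)B_n'(z)|\lesssim \alpha/(1-|a_n|^2)$.

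The main obstacle, and the reason the choice of $\alpha$ must be made quantitatively, is that both contributions are of the same order $1/(1-|a_n|^2)$: the error is \emph{not} automatically negligible against the main term and will dominate if $\alpha$ is taken comparable to $\delta$. Balancing them explicitly shows that it suffices to pick $\alpha$ as a sufficiently small multiple of $\delta$ (in particular, satisfying $2\alpha<\delta$, which also secures the disjointness of the first step). With such an $\alpha$ the difference is positive and of order $\delta/(1-|a_n|^2)$, and passing from $1-|a_n|^2$ to $1-|a_n|$ costs only a factor of two, giving the desired bound $|B'(z)|\ge\beta/(1-|a_n|)$ with $\beta=\beta(\delta)>0$.
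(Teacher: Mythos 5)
Your proof is correct and follows essentially the same route as the paper's source for this lemma (Lemma 3.5 of Girela--Pel\'aez--Vukoti\'c, which the authors cite instead of giving a proof): factor $B=b_{a_n}B_n$, get separation $\rho(a_m,a_n)\ge\delta$ from $|B_n(a_n)|\ge\delta$, and bound $B'=b_{a_n}'B_n+b_{a_n}B_n'$ via Schwarz--Pick, choosing $\alpha$ small in terms of $\delta$ so that the term $|b_{a_n}'B_n|\gtrsim \delta'/(1-|a_n|^2)$ dominates $|b_{a_n}B_n'|\lesssim \alpha/(1-|a_n|^2)$. You correctly identify and resolve the one genuine quantitative point, namely that both terms are of order $(1-|a_n|)^{-1}$ so $\alpha$ must be taken as a sufficiently small multiple of $\delta$ rather than merely satisfying $2\alpha<\delta$.
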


Though the statement of Lemma \ref{lem-GPV} is formally more general than the one of Lemma 3.5 in \cite{GPV},
the proof is obtained by means of the same argument.

\medskip

We are now in position to prove Theorem \ref{main-thm-2}.

\medskip

\begin{proof}[Proof of Theorem \ref{main-thm-2}]
By the Closed Graph Theorem, the inclusion $X\subseteq \Bloch$ is continuous.
Hence it suffices to prove that there exist a function $f\in H^\infty \subseteq X$
and a constant $\de>0$ such that
\begin{equation}
\label{limsup}
\limsup_{t\to 0^+}\|W_tf-f\|_{\Bloch}\ge \de.
\end{equation}

Assume first that in Lemma \ref{lem1}, case (1) holds. Without loss of generality, we can assume that $\ga_0=1$.
We are going to choose an increasing sequence $r_n\nearrow 1$ and a decreasing one $t_n\searrow 0$
such that $\{r_n\}\cup \{\phi_{t_n}(r_n)\}$
is an interpolating sequence in $\bD$ (in particular, the sequence
$\{r_n\}$ is disjoint from $\{\phi_{t_n}(r_n)\}$).
Moreover, we require that
\[
|\phi_{t_1}(r_1)|<r_1<\dots
<\big|\phi_{t_{n-1}}(r_{n-1})\big|<r_{n-1}
<\big|\phi_{t_n}(r_n)\big|<r_n<\dots
\]
and
\beqn
\label{geom-prop}
1-r_n<\frac 12 \big(1-|\phi_{t_n}(r_n)|\big) < \frac 14 (1-r_{n-1})
\eeqn
for all $n\ge1$. (For $n=1$, we only require the first inequality.)
This will be done by an inductive construction.

Take $t_1>0$ arbitrarily. Since $|\phi_{t_1}(1)|=\lim_{r\to 1^{-}}|\phi_{t_1}(r)|<1$, one can
choose $r_1<1$ so that
\[
1-r_1<\frac 12 \big(1-|\phi_{t_1}(r_1)|\big),
\]
and in particular, $|\phi_{t_1}(r_1)|<r_1$.
This is the first step of the construction.

Suppose $t_{n-1}$ and $r_{n-1}$ have been constructed.
Choose $t_n\in (0, \frac 12 t_{n-1})$ so that
\[
0<1-|\phi_{t_n}(1)|< \frac 12 (1-r_{n-1}).
\]
Then for any $r<1$ sufficiently close to $1$,
\[
1-|\phi_{t_n}(r)|< \frac 12 (1-r_{n-1}).
\]
Hence $r_n<1$
can be chosen in such way that
\[
1-r_n<\frac 12\big(1-|\phi_{t_n}(r_n)|\big)< \frac 14 (1-r_{n-1}).
\]
This defines sequences
$r_n\nearrow 1$ and $t_n\searrow 0$ with all desired properties.

We put
\[
f=\wt B \widehat B^2
\]
where the Blaschke products
$\wt B$, $\widehat B$ correspond to sequences of zeros
$\{r_n\}$, $\{\phi_{t_n}(r_n)\}$, respectively.
	By \eqref{geom-prop}, $\wt B \widehat B$ is an interpolating Blaschke product.

Having in mind \eqref{Bloch norm}, we note that
\begin{eqnarray*}
\|W_t f-f\|_{\Bloch }&\geq & \sup_{z\in \mathbb{D}} \big |(m_t (f \circ \varphi_t))'(z)- f'(z)\big | (1-|z|^2)\\
&\geq &  \sup_{0\le r<1}
\big|
m_t(r)\cdot f'\big(\phi_t(r)\big)\phi_t'(r)+m_t'(r)f\big(\phi_t(r)\big)-f'(r)
\big|(1-r).
\end{eqnarray*}

Since $f\big(\phi_{t_n}(r_n)\big)=f'\big(\phi_{t_n}(r_n)\big)=0$, by substituting $r=r_n$, $t=t_n$ in the above
inequality, we get
\[
\|W_{t_n}f-f\|_{\Bloch }\ge |f'(r_n)|(1-r_n)\ge \de>0,
\]
for all $n$ (we applied Lemma \ref{lem-GPV} to the Blaschke product $f$ and to $a_n=r_n$). This gives \eqref{limsup}.

\medskip

Now assume Case (2) in the structure Lemma~\ref{lem1}.
Then $\phi_t$ for $t>0$ is a disc automorphism, being either elliptic, parabolic or hyperbolic, and
this classification is the same for all $t$ (see \cite{BCD}, for instance).

\medskip

Hence, the flow $\{\varphi_t\}_{t\geq 0}$ can be continued to a holomorphic flow $\{\phi_t: \, t\in\bR\}$.
In the parabolic case, we will assume
(without loss of generality) that $1$ is not the Denjoy-Wolff point
of $\{\phi_t: t\ge 0\}$. In the hyperbolic case, we will assume that $1$ is not the Denjoy-Wolff point
neither of $\{\phi_t: t\ge 0\}$ nor of $\{\phi_{-t}: t\ge 0\}$.

Then, in all three cases, for a small $t_0>0$, the curves $\eta_r: [0,t_0]\to \bD$,
$$
\eta_r(t):=\phi_t(r),
$$
tend as $r\to 1^-$ in the $C^1$ metric to the curve $\eta_1$. This latter curve is a one-to-one smooth
parametrization of a small subarc
of $\pt\bD$, one of whose endpoints is $1$.

Put $r_n=1-2^{-n}$ for $n\ge N_0$, where $N_0$ is large enough. If the curve $\eta_1$ is contained in
$\{\im z\ge 0\}$, then there exist $t_n\searrow 0$ such that $\arg (\phi_{t_n}(r_n)-1)=3\pi/4$, $n\ge N_0$.
If $\eta_1$ is contained in $\{\im z\le 0\}$, then one can choose $t_n\searrow 0$ so
that $\arg(\phi_{t_n}(r_n)-1)=-3\pi/4$, $n\ge N_0$. In both cases,
\[
\frac{1-\re \phi_{t_n}(r_n)} {2^{-n}}\to 1\quad \text{ as }n\to\infty.
\]

It follows that
the sequence
$\{r_n\}\cup \big\{\phi_{t_n}(r_n)\big\}$ is contained in a Stoltz angle
with vertex at $1$ and is separated in the pseudohyperbolic metric $\rho(\cdot, \cdot)$.
Thus, this sequence is interpolating (see \cite{Nik}, Lecture VII) and the rest of the proof runs as in the Case~(1).
\end{proof}

\begin{rem}\label{non-separability}
	Any space $X$ satisfying \eqref{containment} is not separable. Indeed,
	 let $B$ be an interpolating Blaschke product with (infinitely many) zeros on the radius $[0,1)$, for instance, at the points $1-2^{-n}$. Put
$B_t(z)=B(\exp(-it)*z)$. Then each Blaschke product $B_t$ belongs to $X$. Using Lemma ~\ref{lem-GPV}, we see that the distances
	 $\|B_t-B_s\|_{\Bloch}$, where $t,s \in [0, 2\pi)$ and $t \ne s$, are uniformly bounded from below by some constant $\epsilon >0$ which implies our assertion. 
\end{rem}

\end{document}